\renewcommand\@biblabel[1]{}
\numberwithin{equation}{section}
\newcommand{\beq}{\begin{equation}}
\newcommand{\eeq}{\end{equation}}
\newcommand{\beqs}{\begin{eqnarray*}}
\newcommand{\eeqs}{\end{eqnarray*}}
\newcommand{\beqn}{\begin{eqnarray}}
\newcommand{\eeqn}{\end{eqnarray}}
\newcommand{\beqa}{\begin{array}}
\newcommand{\eeqa}{\end{array}}
\def\lra{\longrightarrow}
\def\bc{\begin{center}}
\def\ec{\end{center}}
\def\begeq{\begin{equation}}
\def\endeq{\end{equation}}
\def\and{\quad{\rm and}\quad}
\let\lra=\longrightarrow
\def\mapright\#1{\,\smash{\mathop{\lra}\limits^{\#1}}\,}
\newtheorem{prop}{Proposition}[section]
\newtheorem{theo}[prop]{Theorem}
\newtheorem{lem}[prop]{Lemma}
\newtheorem{claim}[prop]{Claim}
\newtheorem{cor}[prop]{Corollary}
\newtheorem{rem}[prop]{Remark}
\begin{document}

\date{}
\author {Yuxing Deng }
\author { Xiaohua $\text{Zhu}^*$}

\thanks {* Partially supported by the NSFC Grants 11271022 and 11331001}
\subjclass[2000]{Primary: 53C25; Secondary: 53C55,
58J05}
\keywords { Ricci flow, Ricci soliton, $\kappa$-solution}

\address{ Yuxing Deng\\School of Mathematical Sciences, Beijing Normal University,
Beijing, 100875, China\\
dengyuxing@mail.bnu.edu.cn}

\address{ Xiaohua Zhu\\School of Mathematical Sciences and BICMR, Peking University,
Beijing, 100871, China\\
xhzhu@math.pku.edu.cn}

\title{ Asymptotic behavior of positively curved steady Ricci Solitons, II}
\maketitle

\section*{\ }

\begin{abstract} In this note,  we show  that any $n$-dimensional $\kappa$-noncollapsed steady K\"ahler-Ricci soliton with nonnegative bisectional curvature must be flat.  The result is an  improvement to  our former  work in  \cite{DZ2}.
%We also obtain that $\kappa$-noncollapsed and nonnegatively curved eternal solution on noncompact complex surface must be a flat flow.
\end{abstract}

\section{Introduction}

In    \cite{DZ2}, we prove

\begin{theo}\label{main-theorem-nonexistence-1-old} There is no any
$\kappa$-noncollapsed steady K\"{a}hler-Ricci soliton with   nonnegative sectional curvature and positive Ricci curvature.
\end{theo}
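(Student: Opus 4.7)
The plan is to argue by contradiction: assume $(M^{2n},g,J,f)$ is a $\kappa$-noncollapsed steady K\"ahler-Ricci soliton with ${\rm Ric}>0$ and ${\rm sec}\ge 0$. The strategy is to extract a nontrivial asymptotic gradient shrinking K\"ahler-Ricci soliton as a blow-down of the associated eternal Ricci flow, use Cheeger-Gromoll splitting together with the K\"ahler structure to force this asymptotic shrinker to be flat $\CC^n$, and then derive a contradiction with ${\rm Ric}>0$ on $(M,g)$ via the vanishing of the asymptotic volume ratio.

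First I would gather the preparatory ingredients. From the standard identity $R+|\nabla f|^2=R_{\max}$ together with ${\rm Ric}>0$, the scalar curvature $R$ attains its maximum $R_{\max}$ at a unique interior point $o\in M$ where $\nabla f(o)=0$, and integral curves of $-\nabla f$ from $o$ yield proper rays along which $R$ decays to $0$. Since $R_{i\bar j}=\partial_i\partial_{\bar j}f$ and the real Hessian of $f$ is $J$-invariant, $\nabla f$ is real-holomorphic and its flow $\phi_t$ acts by biholomorphic isometries. The hypothesis ${\rm sec}\ge 0$ upgrades to nonnegative holomorphic bisectional curvature via the K\"ahler identity $B(X,Y)=K(X,Y)+K(X,JY)$; combined with ${\rm Ric}>0$ and Mok's strong maximum principle, either $(M,g)$ splits holomorphically off a flat factor (impossible since ${\rm Ric}>0$) or the bisectional curvature is strictly positive off a thin exceptional set. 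Throughout, Hamilton's trace-type bound $|{\rm Rm}|\le C\,R$ supplies uniform curvature control.

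Second, I would run the blow-down analysis. The eternal flow $g(t)=\phi_t^{*}g$ is a Perelman $\kappa$-solution, being $\kappa$-noncollapsed on all scales with bounded nonnegative sectional curvature. Choose points $p_i\to\infty$ along a $\nabla f$-trajectory and form rescalings $g_i(t)=R(p_i)\,g\bigl(R(p_i)^{-1}t\bigr)$ based at $p_i$. Hamilton's compactness theorem, combined with uniform estimates on the parallel complex structures $J$, yields a subsequential limit which is a K\"ahler ancient $\kappa$-solution $(M_\infty,g_\infty(t),J_\infty,p_\infty)$ with nonnegative bisectional curvature. Perelman's asymptotic-soliton construction applied to $(M_\infty,g_\infty(t))$ then produces a nontrivial gradient shrinking K\"ahler-Ricci soliton $(N,h,J_N,u)$, still $\kappa$-noncollapsed and with bisectional curvature $\ge 0$.

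Finally I would derive the contradiction. Since each base point $p_i$ lies on a $\nabla f$-ray emanating from $o$, the limit $M_\infty$ contains a line, so by Cheeger-Gromoll it splits isometrically off $\R$; the K\"ahler structure upgrades this to a holomorphic splitting off $\CC$. Pushing the splitting through to the asymptotic shrinker $(N,h)$ and iterating, together with Ni's classification of $\kappa$-noncollapsed K\"ahler shrinkers with nonnegative bisectional curvature, forces $N$ to be flat $\CC^n$. But then the blow-down geometry of $(M,g)$ is Euclidean, so $(M,g)$ has positive asymptotic volume ratio, contradicting the ${\rm AVR}=0$ theorem for $\kappa$-noncollapsed steady solitons with ${\rm Ric}>0$. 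The main obstacle is the K\"ahler splitting step: one must verify that $J_\infty$ is parallel with respect to $g_\infty$ and that the Cheeger-Gromoll line is $J_\infty$-compatible, which requires convergence of $J$ under the rescaling in $C^\infty_{\rm loc}$ and that the asymptotic soliton $(N,J_N)$ inherits the complex structure from $(M_\infty,J_\infty)$ --- exactly where the K\"ahler hypothesis of the theorem is decisively used.
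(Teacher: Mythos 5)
Your setup (blow-down of the eternal flow $g(t)=\phi_t^{*}g$, Hamilton compactness, a line in the limit via Toponogov along a $\nabla f$-ray, Cheeger--Gromoll plus the parallel complex structure to split off a complex line) matches the skeleton of the argument in \cite{DZ2}, but the endgame has a genuine gap. After splitting off one $\CC$-factor you assert that ``iterating, together with Ni's classification \dots forces $N$ to be flat $\CC^n$.'' There is no mechanism to iterate: the ray construction produces exactly one line, so the splitting leaves an $(n-1)$-dimensional factor about which you only know that it is a $\kappa$-noncollapsed K\"ahler shrinker (or ancient solution) with nonnegative bisectional curvature, and such objects need not be flat --- $\CC P^{n-1}\times\CC$ with the product of the Fubini--Study shrinker and the Gaussian soliton is non-flat, $\kappa$-noncollapsed, has nonnegative bisectional curvature, and splits off a complex line. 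Already in complex dimension one the relevant asymptotic shrinker is the round $\CC P^1$, not $\CC$. The closing step is also unjustified: flatness of the asymptotic-in-time soliton of a spatial blow-down limit $(M_\infty,g_\infty(t))$ does not translate into positivity of the asymptotic volume ratio of the original $(M,g)$ without a substantial additional argument.

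The paper closes the argument quite differently. The splitting is used to drive an induction on dimension (via Cao's dimension reduction) whose output is the curvature decay $R(p,t)\to 0$ as $t\to-\infty$ for every fixed $p\neq o$; the non-flat lower-dimensional factor is excluded by the induction hypothesis, and in the base case by the observation that the split-off real surface factor would have to be a compact shrinking round sphere while the limit is shown (via the Harnack inequality \eqref{eq:Harnack-complex} and Cao's argument, cf.\ Lemma \ref{lem-curvature-decay}) to be a \emph{steady} soliton, and a compact steady soliton is flat. The contradiction then comes from an ingredient your proposal never touches: Bryant's Poincar\'e coordinates linearizing $Z=\frac{\nabla f-\sqrt{-1}J\nabla f}{2}$, in which the points $p_k=\phi_{t_k}(p)=(e^{-t_kh_1},0,\dots,0)$ carry a uniform lower bound $R(p_k)\geq C>0$ (Lemma \ref{prop-contradiction argument}, proved by estimating lengths of integral curves of $J\nabla f$), which is incompatible with $R(p,t_k)\to 0$. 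Without this uniform lower bound, or some substitute for it, your argument does not close.
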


In this note,   we  improve Theorem \ref{main-theorem-nonexistence-1-old}  in sense of  nonnegative bisectional curvature.  Namely, we have

\begin{theo}\label{main-theorem-nonexistence-1}There is no any
$\kappa$-noncollapsed steady K\"{a}hler-Ricci soliton with   nonnegative bisectional curvature and positive Ricci curvature.
\end{theo}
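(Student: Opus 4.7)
The plan is to reduce Theorem \ref{main-theorem-nonexistence-1} to the case of a steady K\"ahler--Ricci soliton with \emph{strictly} positive bisectional curvature, and then to revisit the asymptotic analysis of \cite{DZ2} under this weaker curvature hypothesis. A steady soliton is self-similar under its own Ricci flow, so the Bando--Mok strong maximum principle applies to the bisectional curvature: either the bisectional curvature is strictly positive on all of $M$, or its null space defines a parallel, holomorphic, $J$-invariant distribution. In the latter case the de~Rham splitting theorem in the K\"ahler category produces an isometric and holomorphic splitting $M=\mathbb{C}^{k}\times N$ with $N$ carrying strictly positive bisectional curvature, and the assumption of strictly positive Ricci curvature forces $k=0$. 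Thus it suffices to rule out the case in which $M$ itself has strictly positive bisectional curvature.

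Having made this reduction, I would follow the framework of \cite{DZ2}: study the behavior of the soliton along the flow of $\nabla f$, rescale at spatial infinity, and extract a blow-down limit which ought to carry the structure of a gradient shrinking K\"ahler--Ricci soliton. The steps in \cite{DZ2} that used nonnegative sectional curvature were Toponogov-type comparison arguments controlling the intrinsic geometry of the level sets $\{f=c\}$ and volume comparison used to extract a nontrivial blow-down limit. Under nonnegative bisectional curvature these must be replaced by K\"ahler-adapted substitutes: the Cao--Hamilton differential Harnack inequality for the K\"ahler--Ricci flow is valid under the weaker hypothesis and yields the needed gradient control on the scalar curvature, while the soliton identities $\mathrm{Ric}=\nabla^{2}f$ and $R+|\nabla f|^{2}=\mathrm{const}$ give enough control on $|\nabla f|$ and on $\nabla^{2}f$ to locate the level sets of $f$ and set up the rescaling.

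The resulting blow-down limit is a $\kappa$-noncollapsed gradient shrinking K\"ahler--Ricci soliton with nonnegative bisectional curvature and nontrivial Ricci curvature, inheriting noncollapsing from the original soliton via Perelman's reduced volume monotonicity. Known rigidity results for noncompact gradient shrinking K\"ahler--Ricci solitons with nonnegative bisectional curvature then force such a limit to be flat, contradicting the positivity of Ricci curvature obtained from the assumption on $M$. This contradiction completes the proof.

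The main obstacle is the middle step: every sectional-curvature comparison invoked in \cite{DZ2} has to be rebuilt in the K\"ahler setting using only bisectional curvature, positive Ricci, and the soliton structure. Carrying out the asymptotic analysis without Toponogov comparison, in particular controlling the intrinsic geometry of the level sets $\{f=c\}$ and the rate at which $|\nabla f|$ stabilizes at infinity along $\nabla f$-trajectories, is where the new technical content of this note has to sit, since it is precisely the step where the sectional-curvature hypothesis of Theorem \ref{main-theorem-nonexistence-1-old} is not replaceable by a purely formal substitution.
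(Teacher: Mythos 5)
Your proposal has genuine gaps at the points where the actual work of this note happens, and it also misidentifies the structure of the blow-down limit. First, the limits that arise here are \emph{not} gradient shrinking solitons extracted via reduced volume monotonicity. The paper rescales along the backward flow at a fixed point $p$, using that $R(p,t)$ is monotone and bounded, so that the limit is an eternal flow with $R_{\infty}(p_{\infty},t)\equiv 1$; the trace Harnack inequality (\ref{eq:Harnack-complex}) combined with $\partial_t R_{\infty}(p_{\infty},t)=0$ then forces the limit to be a \emph{steady} K\"ahler--Ricci soliton, following Cao's argument in \cite{C1}. Building the argument around a shrinking limit and rigidity of noncompact shrinkers is a different (and here unsupported) mechanism; nothing in your outline produces the noncollapsed shrinking soliton you would need, and your final contradiction (``flat limit versus positive Ricci'') is not the one that closes the argument.

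Second, you have not supplied the replacement for the Toponogov/Cheeger--Gromoll splitting, which is precisely the new content of this note. The paper's key step (Lemma \ref{lem-curvature-decay}) constructs a parallel vector field on the limit as $W=V-X$, where $V$ is the soliton vector field of the limit (which vanishes at $p_{\infty}$ because $\partial_t R_{\infty}(p_{\infty},t)=0$) and $X$ is the limit of the rescaled $\nabla f$ (which is nonzero at $p_{\infty}$ by the identity $|\nabla f|^2+R=R_{\max}$ and the assumption $\lim_{t\to-\infty}R(p,t)=C_0>0$). Since $\nabla V$ and $\nabla X$ both equal the Ricci tensor of the limit, $W$ is parallel and nowhere zero, and together with $J_{\infty}W$ it splits off a complex line. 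This dichotomy (either $R(p,t)\to 0$ or the limit splits) feeds an induction on dimension: the split factor is a lower-dimensional noncollapsed steady soliton, flat by the inductive hypothesis (with the base case handled by the surface classification), so the curvature decay must hold; the contradiction is then with the uniform lower bound $R(p_k)>C$ of Lemma \ref{prop-contradiction argument} at the points $p_k=\phi_{t_k}(p)$ in Poincar\'e coordinates. None of these three ingredients --- the parallel field $W$, the induction, and the lower bound versus decay contradiction --- appears in your outline, and your closing paragraph concedes that the replacement for the comparison geometry is left open. Your opening reduction to strictly positive bisectional curvature via Bando--Mok is also not needed (positive Ricci is already assumed) and does not help, because the splitting that matters occurs on the blow-down limits, not on $M$ itself.
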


As an application of Theorem  \ref{main-theorem-nonexistence-1-old}, we  get the following rigidity result for the steady K\"ahler-Ricci solitons and  K\"{a}hler-Ricci flow.

\begin{theo}\label{theorem-eternal-flow}
Any $\kappa$-noncollapsed steady K\"ahler-Ricci soliton $(M,g,f)$  with nonnegative bisectional curvature must be flat. More generally, any $\kappa$-noncollapsed noncompact and eternal K\"{a}hler-Ricci flow with  nonnegative bisectional curvature  and uniformly bounded curvature must be a flat flow. 
\end{theo}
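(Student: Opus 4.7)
My plan splits the proof of Theorem~\ref{theorem-eternal-flow} into two parts, mirroring its two statements. First I handle the steady soliton case by combining Theorem~\ref{main-theorem-nonexistence-1} with a splitting argument, then I reduce the eternal flow case to the soliton case via a rigidity argument for eternal K\"ahler-Ricci flows.

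For the soliton case, let $(M,g,f)$ be a $\kappa$-noncollapsed steady K\"ahler-Ricci soliton with nonnegative bisectional curvature. If $\mathrm{Ric}>0$ everywhere, Theorem~\ref{main-theorem-nonexistence-1} gives an immediate contradiction; so the Ricci tensor must have a nontrivial null eigenvector at some point. The soliton structure generates an eternal K\"ahler-Ricci flow on which the nonnegative bisectional curvature is preserved (Bando-Mok), and Hamilton's strong maximum principle in the K\"ahler setting then implies that the null distribution of $\mathrm{Ric}$ is a parallel holomorphic subbundle of constant rank $k\ge 1$. Passing to the universal cover $\widetilde M$ (which is still $\kappa$-noncollapsed at the curvature scale, by the standard local-isometry argument using the uniform curvature bound of the soliton), the de Rham decomposition yields an isometric splitting $\widetilde M=\mathbb{C}^k\times N$, where $N$ is a complete K\"ahler manifold with nonnegative bisectional curvature and, by construction, strictly positive Ricci curvature.

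Writing the K\"ahler-Ricci soliton equations in product coordinates $(z,y)\in\mathbb{C}^k\times N$, flatness of the first factor forces $\widetilde f_{z^i\bar z^j}=0$, vanishing of the holomorphic Hessian of $\widetilde f$ forces $\widetilde f_{z^i y^\alpha}=0$, and block-diagonality of $\mathrm{Ric}$ forces $\widetilde f_{z^i\bar y^\alpha}=0$. Consequently $\widetilde f(z,y)=A(z)+B(y)$ with $A$ real affine linear and $(N,g_N,B)$ itself a gradient steady K\"ahler-Ricci soliton. Since $B_{\widetilde M}((q,p),r)\subset B_{\mathbb{C}^k}(q,r)\times B_N(p,r)$, a direct volume comparison transfers the $\kappa$-noncollapsing of $\widetilde M$ to a $\kappa'$-noncollapsing of $N$. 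Theorem~\ref{main-theorem-nonexistence-1} applied to $N$ then forces $\dim N=0$, so $\widetilde M=\mathbb{C}^n$ and $M$ is flat.

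For the eternal flow case, the plan is to extract a gradient steady K\"ahler-Ricci soliton as a limit and invoke the soliton case. Cao's Li-Yau-Hamilton Harnack inequality for K\"ahler-Ricci flow with nonnegative bisectional curvature, together with the uniform curvature bound and $\kappa$-noncollapsing, produces a pointed Cheeger-Gromov limit flow based at points where $R$ approaches its space-time supremum; on this limit the Harnack inequality becomes an equality identically, and Hamilton's rigidity argument for eternal flows attaining their supremum then identifies the limit as a gradient steady K\"ahler-Ricci soliton. The limit inherits nonnegative bisectional curvature and $\kappa$-noncollapsing, so the soliton case forces it to be flat, hence $\sup R=0$ on the original flow. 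Nonnegative bisectional curvature then gives $\mathrm{Ric}\equiv 0$, and since on a K\"ahler manifold with nonnegative bisectional curvature the vanishing of the Ricci tensor polarizes to the vanishing of the full curvature tensor, the original flow is flat. The main obstacle I foresee is the rigidity step: controlling the Harnack and the noncollapsed pointed compactness at spatial infinity carefully enough to produce the steady soliton limit and to propagate its flatness back to the original flow. A secondary, more bookkeeping-level difficulty is tracking $\kappa$-noncollapsing through the lift to the universal cover and the de Rham factor, which is handled by the volume comparison above using the uniform curvature bound.
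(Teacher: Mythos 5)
Your proposal is correct and follows essentially the same route as the paper, which proves this theorem in one line by citing the argument of Theorem 1.4 in \cite{DZ2}: reduce the soliton case to Theorem \ref{main-theorem-nonexistence-1} by splitting off the null space of the Ricci tensor via the strong maximum principle and the de Rham decomposition, and reduce the eternal-flow case to the soliton case via Cao's Harnack/rigidity theorem for eternal K\"ahler-Ricci flows attaining $\sup R$. The supporting details you supply (lifting the $\kappa$-noncollapsing to the universal cover and to the de Rham factor, and the splitting of the soliton potential) are the right ones.
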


In the proof of Theorem \ref{main-theorem-nonexistence-1-old}, one main step is to use the blow-down argument to analysis the structure of limit  K\"{a}hler-Ricci flow of a sequence of rescaled  K\"{a}hler-Ricci flows.  The  nonnegative sectional curvature condition is used  to guarantee  the existence  of lines  on  the  limit   flow  by using Toponogov comparison Theorem (cf.  \cite{MT}) and consequently   the  limit   flow can be split off a  line by Cheeger-Gromoll splitting theorem \cite{CG}.  The splitting property  is crucial in  our proving the curvature decay:
\begin{align}\label{curvature-decay}
R(p,t)\rightarrow0,~ as ~t\rightarrow-\infty.
\end{align}
Here $R(p,t)$ is a scalar curvature of   corresponding Ricci flow of   steady K\"ahler-Ricci soliton  $(M,g,f)$. 

The new ingredient at present  is to prove a local  splitting result for $\kappa$-noncollapsed steady K\"ahler-Ricci solitons with  nonnegative bisectional curvature (cf.  Lemma \ref{lem-curvature-decay}).    Then we  can generalize the argument in  the proof of Theorem \ref{main-theorem-nonexistence-1-old} to  Theorem \ref{main-theorem-nonexistence-1}.

Theorem \ref{theorem-eternal-flow} can be regarded as a generalization of Ni's rigidity  theorem  for ancient solution of  K\"{a}hler-Ricci flow with maximal volume growth \cite{Ni}. Ni's result is a complex version  of Perelman's  Theorem for ancient solution of Ricci flow in  \cite{Pe}, Section 11.4.

\vskip5mm

\noindent {\bf Acknowledgements.} The work was done partially when the second author is   visiting at the Mathematical Sciences Research Institute at Berkeley during the spring 2016 semester. 
 The author  would like to thank her hospitality and the financial supports,    National Sciences Foundation under Grant No. DMS-1440140, and  Simons Foundation.

\section{Primary results in \cite{DZ2}}

In this section, we recall some results proved in \cite{DZ2}, which will be used  to complete  the proof of Theorem  \ref{main-theorem-nonexistence-1} in the  next section.

Let $(M,g,f)$ be a complete $\kappa$-noncollapsed  steady K\"{a}hler-Ricci soltion with  nonnegative bisectional curvature.  It is proved that there exists a  quilibrium point  $o$ of $M$ such that $\nabla f(o)=0$ in \cite{DZ1}.  Let $\phi_{t}$ be a family of biholomorphisms generated by $-\nabla f$. Let $g(t)=\phi^{*}_{t}(g)$. Then $g(t)$ satisfies the Ricci flow
\begin{align}\label{ricci-equation} \frac{\partial g}{\partial t}=-2 {\rm Ric}(g),~t\in(-\infty,\infty).
\end{align}

Let  $p_{i}\in M$  be any sequence with ${\rm dist}(o,p_{i})\rightarrow\infty$ as $i\to\infty $. We consider a sequence of rescaled flows $(M,R(p_{i})g(R^{-1}(p_{i})t),p_{i})$. 
By using the compactness theorem, Theorem 3.3 in  \cite{DZ2},  we prove the following convergence result.
  
\begin{theo}\label{soliton-1}
$(M,R(p_{i})g(R^{-1}(p_{i})t),p_{i})$ converges subsequently to a pseudo $\kappa$-solution  $(M_{\infty},g_{\infty}(t))$   ($t\in (-\infty, \infty)$) of K\"{a}hler-Ricci flow in the Cheeger-Gromov topology.
\end{theo}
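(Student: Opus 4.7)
The plan is to verify the hypotheses of the compactness theorem (Theorem 3.3 of \cite{DZ2}) for the rescaled sequence $(M, g_i(t), p_i)$ where $g_i(t) = R(p_i) g(R^{-1}(p_i)t)$. What is needed is a uniform curvature bound on compact parabolic neighborhoods of the base points, a uniform injectivity radius lower bound at $(p_i, 0)$, and the fact that each flow is defined on all of $(-\infty, \infty)$. The last point is immediate: the soliton generates an eternal flow and eternality is preserved under parabolic rescaling.

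First I would collect a priori bounds on the soliton itself. Chen's theorem gives $R \ge 0$, and the soliton identity $R + |\nabla f|^2 \equiv C_0$ yields the upper bound $R \le C_0$ on $M$ together with the decay $R(p_i) \to 0$ at infinity (so $R^{-1}(p_i) \to \infty$ and the rescaling genuinely zooms in). By construction $R_{g_i(0)}(p_i) = 1$, so the sequence is normalized at the base points.

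Next, to obtain spatial curvature bounds on the rescaled metrics, I would invoke the Li--Yau--Hamilton (Cao) Harnack inequality for K\"ahler-Ricci flow with nonnegative bisectional curvature. Applied to the ancient flow $g(t)$, this inequality forces pointwise monotonicity of $R$ in time, so on any interval $(-T, 0]$ the scalar curvature at each spatial point is controlled by its value at $t = 0$. Integrating the Harnack quantity along minimizing geodesics in $g(0)$ then bounds the ratio $R_{g_i(t)}(q) = R(\phi_{R^{-1}(p_i)t}(q))/R(p_i)$ uniformly for $q$ in the $g_i(0)$-ball of radius $r$ around $p_i$ and $t \in (-T, 0]$. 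Bounds on the full curvature tensor then follow from the pointwise comparison $|\mathrm{Rm}| \le C R$ valid under nonnegative bisectional curvature, giving $|\mathrm{Rm}|_{g_i(t)} \le C(r, T)$ on the desired parabolic neighborhood. The $\kappa$-noncollapsing hypothesis is scale-invariant, so each $g_i$ is $\kappa$-noncollapsed, and combined with the curvature bound this yields the required injectivity radius lower bound at $(p_i, 0)$.

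Applying Theorem 3.3 of \cite{DZ2} then extracts a subsequential Cheeger--Gromov limit $(M_\infty, g_\infty(t))$, $t \in (-\infty, \infty)$. The limit is K\"ahler, has nonnegative bisectional curvature and is $\kappa$-noncollapsed (all three closed conditions under smooth Cheeger--Gromov convergence), satisfies $R_{g_\infty(0)}(p_\infty) = 1$ (so it is nonflat), and is defined on $\mathbb{R}$. These properties are precisely what the authors call a pseudo $\kappa$-solution, completing the proof. The main obstacle in this plan is the spatial curvature bound: there is no a priori reason the ratio $R(q)/R(p_i)$ should stay bounded when $q$ varies in a $g$-ball of radius $r/\sqrt{R(p_i)} \to \infty$, and the argument genuinely uses both the Harnack inequality and the soliton structure to produce control that is uniform in $i$.
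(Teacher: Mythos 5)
Your overall strategy --- reduce everything to the compactness theorem, Theorem 3.3 of \cite{DZ2} --- is the same as the paper's, which proves Theorem \ref{soliton-1} precisely by citing that theorem (and notes it is part of Theorem 1.5 of \cite{DZ2}). The problem is in your verification of the key hypothesis, the uniform curvature bound on rescaled balls, which you yourself flag as ``the main obstacle'' but do not actually resolve. The integrated Li--Yau--Hamilton/Cao Harnack inequality runs forward in time: for $t_1<t_2$ it gives $R(x_2,t_2)\ge \exp\bigl(-d_{t_1}(x_1,x_2)^2/(2(t_2-t_1))\bigr)\,R(x_1,t_1)$, so to bound $R(q,0)$ from above for $q$ with $d(q,p_i)\le r\,R(p_i)^{-1/2}$ you must compare with $R(p_i,s)$ at a \emph{later} time $s\sim r^2R(p_i)^{-1}$. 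Since $R(p_i,t)$ is increasing in $t$ and a priori only bounded by $R_{\max}$, this yields $R(q,0)\le e^{r^2/2}R_{\max}$, i.e.\ a bound on the rescaled curvature of order $R_{\max}/R(p_i)$, which is not uniform in $i$ when $R(p_i)\to 0$. The ``bounded curvature at bounded rescaled distance'' estimate is exactly the hard content of the compactness theorem in \cite{DZ2}; it is obtained there by a Perelman-type point-picking and contradiction argument exploiting the $\kappa$-noncollapsing (in the spirit of Section 11 of \cite{Pe}), not by a direct integration of the Harnack quantity along geodesics. As written, your verification of the hypotheses therefore has a genuine gap at the decisive step.

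Two smaller points. First, the claim that the identity $R+|\nabla f|^2=R_{\max}$ ``yields the decay $R(p_i)\to 0$ at infinity'' is a non sequitur: the identity only gives $0\le R\le R_{\max}$, and the spatial/temporal decay of $R$ is essentially what the rest of the paper is devoted to proving (cf.\ \eqref{curvature-decay}, Corollary \ref{cor-of-splitting-theorem} and Claim \ref{key-lemma}); fortunately nothing in the compactness argument requires this, and the statement of Theorem \ref{soliton-1} does not presuppose it. Second, a pseudo $\kappa$-solution is defined in the paper as a non-flat $\kappa$-noncollapsed flow satisfying the Harnack inequality \eqref{eq:Harnack-complex}; your list of properties of the limit omits this requirement, although it is easily supplied since \eqref{eq:Harnack-complex} holds on each rescaled flow by Cao's Harnack theorem, is scale invariant, and passes to smooth Cheeger--Gromov limits.
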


  Theorem \ref{soliton-1} is in  fact of a part of Theorem 1.5 in  \cite{DZ2}.   Since  $(M,g,f)$ is weakened  to have  
  nonnegative bisectional curvature condition, we could not get the splitting property of limit  $(M_{\infty},g_{\infty}(t))$ as in Theorem 1.5 under nonnegative sectional curvature condition. Here  a pseudo $\kappa$-solution  of K\"{a}hler-Ricci flow   means that a non-flat  K\"{a}hler-Ricci flow  with $\kappa$-noncollapsed condition satisfies the  Harnack inequality,
  \begin{equation}\label{eq:Harnack-complex}
\frac{\partial R}{\partial t}+\nabla_{i}RV^{i}+\nabla_{\bar{i}}RV^{\bar{i}}+R_{i\bar{j}}V^{i}V^{\bar{j}}\geq0,~\forall~V\in T^{(1,0)}M.
\end{equation}
   
    For e a steady K\"{a}hler-Ricci soliton  $(M,g, J)$ with positive Ricci curvature, which admits an equilibrium point,  Bryant  in \cite{Bry} proves  that
    there exist global holomorphic coordinates
(Poincar\'{e} coordinates)
$z:M\rightarrow \mathbb{C}^{n}$
which linearize  $Z=\frac{\nabla f-\sqrt{-1}J\nabla f}{2}$ such that
\begin{equation}\label{Bryant-eq}
Z=\sum_{i=1}^{n}h_{i}z_{i}\frac{\partial}{\partial z_{i}},
\end{equation}
  where  $h_{1},\cdots,h_{n}$   are positive constants.
 As in Corollary 5.2  in  \cite{DZ2}, we  choose points $p_{k}=(k,0,\cdots,0,0\cdots,0)\in M$.  By computation of lengths of     integral curves of $J\nabla f$ starting from $p_{k}$, we have

\begin{lem}\label{prop-contradiction argument}
Suppose that $(M,g,f)$ is  an $n$-dimensional $\kappa$-noncollapsed steady K\"ahler-Ricci soliton with nonnegative bisectional curvature and positive Ricci curvature.  Then  there exists a positive constant $C$ such that $R(p_{k})>C$, where $C$ is independent of $p_{k}$.
\end{lem}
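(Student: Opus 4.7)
The plan is to argue by contradiction. Assume that $R(p_{k_j})\to 0$ along some subsequence, and exploit the closed orbits of $J\nabla f$ to obtain an impossibility.

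First, a standard computation for gradient K\"ahler-Ricci solitons shows that $J\nabla f$ is a Killing vector field on $(M,g)$: the soliton equation $R_{i\bar j}+\nabla_i\nabla_{\bar j}f=0$ is Hermitian, which forces $\mathcal L_{J\nabla f}g=0$. Consequently both $R$ and $|\nabla f|^2$ are constant along each orbit of $J\nabla f$, and by the steady soliton identity $R+|\nabla f|^2\equiv R_0$ with $R_0:=R(o)>0$. From \eqref{Bryant-eq} together with $Z=(\nabla f-\sqrt{-1}J\nabla f)/2$ one computes, in real coordinates $z_i=x_i+\sqrt{-1}y_i$, that $J\nabla f=\sum_i h_i(x_i\partial_{y_i}-y_i\partial_{x_i})$, whose flow rotates each $z_i$ by $e^{\sqrt{-1}h_i t}$. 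The orbit through $p_k=(k,0,\dots,0)$ is therefore the closed circle $z_1(t)=ke^{\sqrt{-1}h_1 t}$ of period $2\pi/h_1$, and since $J\nabla f$ is Killing its length equals
\[
L_k=\frac{2\pi}{h_1}|J\nabla f(p_k)|=\frac{2\pi}{h_1}\sqrt{R_0-R(p_k)}\le\frac{2\pi}{h_1}\sqrt{R_0}.
\]

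If $R(p_{k_j})\to 0$, rescale $\tilde g_j:=R(p_{k_j})g$; by Theorem \ref{soliton-1} a subsequence converges in the Cheeger--Gromov sense to a pseudo $\kappa$-solution $(M_\infty,g_\infty,p_\infty)$ with $R_{g_\infty}(p_\infty,0)=1$. The same loop, now measured in $\tilde g_j$, has length $\sqrt{R(p_{k_j})}\,L_{k_j}\to 0$. The natural rescaled Killing field
\[
Y_j:=R(p_{k_j})^{-1/2}\,J\nabla f
\]
is Killing for $\tilde g_j$ and has $|Y_j|_{\tilde g_j}=|J\nabla f|_g\le\sqrt{R_0}$ uniformly on $M$, with $|Y_j|_{\tilde g_j}(p_{k_j})\to\sqrt{R_0}$. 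Its integral curve through $p_{k_j}$ is periodic with length still $\to 0$. Using the Killing-field identity $\nabla^2 Y=\mathrm{Rm}\cdot Y$ and the bounded curvature of the rescaled flow, $Y_j$ has uniform derivative bounds and passes under the Cheeger--Gromov diffeomorphisms to a Killing field $Y_\infty$ on $(M_\infty,g_\infty)$ with $|Y_\infty(p_\infty)|=\sqrt{R_0}>0$.

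The contradiction comes from tracking the periodic orbits: the orbit of $Y_j$ through $p_{k_j}$ sits in a $\tilde g_j$-ball around $p_{k_j}$ whose radius shrinks to zero, so by continuous dependence of solutions of ODEs on their coefficients, the integral curve of $Y_\infty$ starting at $p_\infty$ is the constant curve, forcing $Y_\infty(p_\infty)=0$ and contradicting $|Y_\infty(p_\infty)|=\sqrt{R_0}>0$. The main obstacle I expect is precisely this last step: one has to ensure that the Cheeger--Gromov limits of the ambient metrics, of the Killing fields $Y_j$, and of their periodic flow lines cooperate so that the limiting orbit through $p_\infty$ is genuinely the constant curve on a fixed, $j$-independent time interval.
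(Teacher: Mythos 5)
Your proposal is correct and follows essentially the same route as the paper: the paper derives this lemma (quoting Lemma 5.4 and Corollary 5.2 of \cite{DZ2}) precisely "by computation of lengths of integral curves of $J\nabla f$ starting from $p_k$" in Bryant's Poincar\'e coordinates, i.e.\ the closed orbits of period $2\pi/h_1$ with length bounded by $\tfrac{2\pi}{h_1}\sqrt{R_{\max}}$, combined with the blow-down convergence of Theorem \ref{soliton-1} to reach a contradiction when $R(p_{k_j})\to 0$. Your final step (the rescaled Killing field survives in the limit with norm $\sqrt{R_{\max}}$ at $p_\infty$ while its orbit collapses to a point) is a valid way to close the argument, matching the spirit of the cited proof.
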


 Lemma \ref{prop-contradiction argument} is in fact from  Lemma 5.4 in \cite{DZ2} while the nonnegative sectional curvature  condition is  replaced by  nonnegative bisectional curvature  condition.   This can be done since  we only use the  convergence result,  Theorem \ref{soliton-1} in the  proof  Lemma 5.4.

\section{Proof of  Theorem  \ref{main-theorem-nonexistence-1}}

The following proposition is a key lemma in the proof of Theorem   \ref{main-theorem-nonexistence-1}.

\begin{lem}\label{lem-curvature-decay}
Let $(M,g,f)$ be a $\kappa$-noncollapsed steady K\"{a}hler-Ricci soltion with nonnegative bisectional curvature and positive Ricci curvature. Then, for any $p\in M\setminus\{o\}$, at least one of the following two properties holds:

i)~$R(p,t)\rightarrow0$, as $t\rightarrow-\infty$.

ii)~For any $\tau_{i}\rightarrow -\infty$, $(M,R(p,\tau_i)g(R^{-1}(p,\tau_i)t),\phi_{\tau_{i}}(p))$ converges subsequently to a  flow of  $\kappa$-noncollapsed steady K\"{a}hler-Ricci soltions   $(M_{\infty},g_{\infty}(t),$
\newline $p_{\infty})$ with nonnegative bisectional curvature  
which  splits locally    off   a complex line.
\end{lem}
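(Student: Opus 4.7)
My plan is to establish property (ii) under the assumption that property (i) fails.  Suppose $R(p,t)\not\to 0$ as $t\to-\infty$; then one can extract $\tau_i\to-\infty$ with $R(p,\tau_i)\geq c_0>0$.  The first task is to verify that $d_g(o,\phi_{\tau_i}(p))\to\infty$: since the soliton equation $\nabla^2 f=\mathrm{Ric}>0$ makes $f$ strictly convex with unique minimum at $o$, and the identity $R+|\nabla f|^2\equiv C$ bounds $|\nabla f|$ from above, $f$ is proper; together with $f(\phi_t(p))$ strictly decreasing in $t$ this forces $\phi_{\tau_i}(p)$ to escape every compact set.  Setting $p_i:=\phi_{\tau_i}(p)$, I then invoke Theorem \ref{soliton-1} to extract a Cheeger--Gromov subsequential limit $(M_\infty,g_\infty(t),p_\infty)$, which is a pseudo $\kappa$-solution of K\"{a}hler--Ricci flow with nonnegative bisectional curvature.

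Next I would promote this limit to a steady soliton.  Because $g(t)=\phi_t^*g$ is self-similar, each rescaled metric $\tilde g_i(t):=R(p_i)g(R^{-1}(p_i)t)$ still satisfies the soliton equation, with rescaled gradient $\tilde\nabla f=R^{-1}(p_i)\nabla f$ whose squared norm $|\tilde\nabla f|^2_{\tilde g_i}=R^{-1}(p_i)(C-R)$ is bounded above by $C/c_0$ in a neighborhood of $p_i$.  Shi's derivative estimates provide uniform higher-order control, so $\tilde\nabla f$ converges in $C^\infty_{\mathrm{loc}}$ to a complete gradient vector field $\nabla f_\infty$ on $M_\infty$ that generates $g_\infty(t)$ and satisfies $\mathrm{Ric}(g_\infty)+\nabla^2 f_\infty=0$.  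Thus $(M_\infty,g_\infty,f_\infty)$ is a $\kappa$-noncollapsed steady K\"{a}hler--Ricci soliton with nonnegative bisectional curvature.

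For the local splitting I would apply the K\"{a}hler strong maximum principle of Bando--Mok to the flow $g_\infty(t)$: the null space $\mathcal{K}\subset T^{(1,0)}M_\infty$ of the bisectional curvature tensor is a smooth holomorphic distribution of constant complex rank $k$, parallel with respect to $g_\infty$ and invariant under the Ricci flow.  Provided $k\geq 1$, de Rham's theorem yields a local holomorphic isometric splitting $(M_\infty,g_\infty)\cong\mathbb{C}^k\times (N,g_N)$ near each point, which is precisely the splitting off a complex line required by (ii).

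The main obstacle is therefore to rule out $k=0$, i.e.\ strictly positive bisectional curvature on $(M_\infty,g_\infty)$.  In that situation $(M_\infty,g_\infty,f_\infty)$ would again be a $\kappa$-noncollapsed steady K\"{a}hler--Ricci soliton with positive Ricci, to which Lemma \ref{prop-contradiction argument} applies on $M_\infty$, producing points $q_k\to\infty$ with $R_{g_\infty}(q_k)\geq C'>0$.  My plan is to iterate the blow-down at such $q_k$ and combine the Harnack inequality for pseudo $\kappa$-solutions with an induction on complex dimension (the base case being complex dimension one, where the only candidate---the cigar soliton---fails to be $\kappa$-noncollapsed) to reach a contradiction.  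This iterative splitting argument is the new ingredient that replaces the Toponogov plus Cheeger--Gromoll line argument of \cite{DZ2} that needed nonnegative sectional curvature.
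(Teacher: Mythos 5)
Your proposal correctly sets up the blow-down and correctly promotes the limit to a steady K\"ahler--Ricci soliton by passing the rescaled fields $X_{(i)}=R^{-1}(p,\tau_i)\nabla f$ to a limit $X$ with $\nabla_jX_k=\nabla_kX_j=R^{(\infty)}_{jk}$; this matches the first half of the paper's argument. But there is a genuine gap in how you produce the splitting. You propose to get it from the Bando--Mok parallel null distribution $\mathcal{K}$ of the bisectional curvature and then must rule out $\operatorname{rank}\mathcal{K}=0$; your plan for that case (apply Lemma \ref{prop-contradiction argument} on $M_\infty$, iterate the blow-down, induct on dimension) is essentially circular --- it amounts to assuming the nonexistence statement that Lemma \ref{lem-curvature-decay} is designed to help prove, and producing points with $R_{g_\infty}(q_k)\ge C'$ does not by itself yield a contradiction. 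The paper never needs to exclude strictly positive curvature: it \emph{constructs} a nowhere-vanishing parallel field directly. The mechanism is to extract a \emph{second} soliton field $V$ on $M_\infty$ from the Harnack rigidity (Cao's argument in [C1]): since $\partial_tR(p,t)=2\mathrm{Ric}(\nabla f,\nabla f)>0$, the full limit $\lim_{t\to-\infty}R(p,t)=C_0$ exists, so $R_\infty(p_\infty,t)\equiv1$ and hence $\partial_tR_\infty(p_\infty,t)=2\mathrm{Ric}^{(\infty)}(V,V)=0$, forcing $V(p_\infty)=0$ once positive Ricci is arranged via Cao's dimension reduction. Meanwhile $|X|_{g_\infty}(p_\infty)=\sqrt{C_0^{-1}R_{\max}-1}>0$ because $C_0<R_{\max}$ (uniqueness of the equilibrium point). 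Then $W=V-X$ satisfies $\nabla^{(\infty)}W\equiv0$ and $W(p_\infty)\ne0$, so $W$ and $J_\infty W$ are parallel and nowhere zero, which is exactly the local splitting off a complex line. Both ingredients --- the identity $R_\infty(p_\infty,t)\equiv1$ and the lower bound $|X|(p_\infty)>0$ --- are absent from your write-up.

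Two smaller points. First, the lemma's conclusion (ii) is ``for \emph{any} $\tau_i\to-\infty$''; your extraction of a subsequence with $R(p,\tau_i)\ge c_0$ is weaker than what is needed, and the fix is again the monotonicity \eqref{montone}, which shows the full limit $C_0$ exists so that every sequence works. Second, even granting your $\mathcal{K}$-based splitting when $\operatorname{rank}\mathcal{K}\ge1$, note that a parallel nowhere-zero $W$ automatically forces $\mathcal{K}\ne0$, so the paper's construction subsumes your favorable case; the unfavorable case is precisely where your argument has no content.
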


\begin{proof}First we note 
\begin{align}\label{inden} |\nabla f|^2+R=R_{\max}=\sup_{x\in M}R(x).\end{align}
and  $R(p)\geq 0$. Then  by the uniqueness of  quilibrium points, we have
$$ 0< R(p,t)<R_{\max}.$$  Thus by  relation
\begin{align}\label{montone}
\frac{\partial R(p,t)}{\partial t}=2{\rm Ric}( \nabla f, \nabla f)(\phi_t(p))>0,
\end{align}
%By Harnack inequality, 
%we have $\frac{\partial}{\partial t}R(p,t)\ge0$.  
we see that
  $\lim_{t\rightarrow-\infty}R(p,t)$ exists.  Now we suppose  that  the property i) in Lemma  \ref{lem-curvature-decay}  does not hold.  Then there exists  a point $p\in M$ such that
\begin{align}\label{eq:inf limit-of-R}
\lim_{t\rightarrow-\infty}R(p,t)=C_0>0.
\end{align}
Since $o$ is  an unique quilibrium point, by (\ref{montone}),
%\begin{align}
%R(p,t)\rightarrow R_{\max},~as~t\rightarrow\infty.
%\end{align}
we have $C_0<R_{\max}$.

Consider any  sequence $(M,g_{i}(t),p_{\tau_i})$, where $g_{i}(t)=R(p,\tau_i)g(R^{-1}(p,\tau_i)t)$
 and $p_{\tau_i}=\phi_\tau(p)$  with $\tau_i\to - \infty$.
Then  $(M,g_{i}(t))$ is  $\kappa$-noncollapsed and   curvature of $(M,g_{i}(t))$ is uniformly bounded.  By Hamilton 
compactness theorem \cite{Ha},  $(M,g_{{i}}(t),p_{\tau_{i}})$    converges  subsequently to a  pseudo $\kappa$-solution $(M_{\infty},g_{\infty}(t),p_{\infty})$,  where $t\in(-\infty,\infty)$.  Moreover,  by  $(\ref{eq:inf limit-of-R})$, we have 
\begin{align}\label{eq:constant limit}
R_{\infty}(p_{\infty},t)=\lim_{\tau_{i}\rightarrow-\infty}\frac{ R(  p_{\tau_{i}},  R^{-1}(p,\tau_{i})t)}{R(p,\tau_{i})}=1,~\forall~t\in(-\infty,+\infty).
\end{align}
and consequently,
\begin{equation}\label{eq:2-1}
\frac{\partial}{\partial t}R_{\infty}(p_{\infty},t)\equiv0.
\end{equation}
 Since  $(M_{\infty},g_{\infty}(t);p_{\infty})$ is not flat  by $(\ref{eq:constant limit})$,  we may assume that $(M_{\infty},g_{\infty}(t))$ has positive Ricci curvature   by
Cao's dimension reduction theorem \cite{C2}.
By  the Harnack inequality (\ref{eq:Harnack-complex}) together with condition  (\ref{eq:2-1}),   following the argument in the proof of Theorem 4.1 in \cite{C1}, we
can further prove that $(M_{\infty},g_{\infty}(t), p_{\infty})$ is in fact a steady K\"{a}hler-Ricci soliton, which is $\kappa$-noncollapsed and has nonnegative bisectional curvature and positive Ricci curvature (also see Proposition 2.2, \cite{CT}). More precisely,  there is a smooth vector field $V$ on $M_\infty$ such that
 \begin{align}\label{eq:2-2}
R^{(\infty)}_{ij}=\frac{1}{2}(\nabla^{(\infty)}_{i}V_{j}+\nabla^{(\infty)}_{j}V_{i})
\end{align}
and
\begin{align}\label{eq:2-3}
\nabla^{(\infty)}_{i}V_{j}=\nabla^{(\infty)}_{j}V_{i}.
\end{align}
Thus
$$\frac{\partial R^{(\infty)}(p_{\infty},t)}{\partial t}|_{t=0}=2{\rm Ric}^{(\infty)}(V,V).$$
By (\ref{eq:2-1}),  it follows  $V(p_{\infty})=0$.

Let $X_{(i)}=R^{-1}(p,\tau_i)\nabla f$. Then
$$|\nabla^{i}X_{(i)}|_{\hat g_i}\le  C_0^{-1} |{\rm Ric}^{(i)}|_{\hat g_i}\le C,$$
where $\hat g_i=R(p,\tau_i)g$ and $ {\rm Ric}^{(i)}$ is Ricci curvature of $\hat g_i$.  By Shi's higher order estimate \cite{Sh} and  the soliton equation, we also get
$$|(\nabla^{i})^m X_{(i)}|_{\hat g_{i}}\leq C(n)|( \nabla^{i})^{m-1} {\rm Ric}^{(i)}|_{\hat g_{i}}\le C(m).$$
Thus   by taking a subsequence, we may assume that
$$X_{(i)}\rightarrow X~as~i\rightarrow\infty.$$
Since
\begin{align*}
\nabla^{(i)}_j X_{(i)k}=\nabla^{(i)}_k X_{(i)j}=R^{(i)}_{jk},
\end{align*}
we get
\begin{align}\label{eq:2-4}
\nabla^{(\infty)}_j X_k=\nabla^{(\infty)}_k X_j=R^{(\infty)}_{jk}.
\end{align}
Moreover, by (\ref{inden}), 
$$
|X|_{g_{\infty}}(p_\infty)= \lim_{i\rightarrow\infty}|X_{(i)}|_{g_i}(p_i)=\lim_{i\rightarrow\infty}\sqrt{\frac{R_{\max}}{R(p,t_i)}-1}.
$$
Hence
\begin{align}\label{non-zero}
|X|_{g_{\infty}}(p_\infty)=\sqrt{C_0^{-1}R_{\max}-1}>0.
\end{align}

Let $W=V-X$. Then by  (\ref{eq:2-2}), (\ref{eq:2-3}) and (\ref{eq:2-4}), we have
\begin{align}\label{eq:2-5}
\nabla^{(\infty)}W=\nabla^{(\infty)}(V-X)\equiv0.
\end{align}
On the other hand,   $V(p_{\infty})=0$.  By (\ref{non-zero}),  we see that $W$ is nonzero everywhere.   Thus  the K\"{a}hler manifold  $(M_{\infty},g_{\infty}(0))$  splits locally  off a line along $W$. Note that $J_{\infty}W(p_{\infty})\neq0$ and $$\nabla^{(\infty)}(J_{\infty}W)=0,$$
where  $J_{\infty}$ is the complex structure of $M_\infty$.  Hence $(M_{\infty},g_{\infty}(0))$ also splits locally off a line  along $J_{\infty}W$.  As a  conseqence,  the steady soliton $(M_{\infty},g_{\infty}(0))$  splits locally  off  a  complex  line.  Therefore,  the property  ii)  in Lemma  \ref{lem-curvature-decay}  holds. The lemma is 
proved.
\end{proof}

\begin{rem}
Lemma  \ref{lem-curvature-decay} is still true for  the steady K\"{a}hler-Ricci soliton with   nonnegative bisectional curvature if the $\kappa$-noncollapsing   condition is replaced by the existence of uniform injective radius of soliton.
\end{rem}

\begin{cor}\label{cor-of-splitting-theorem} Let $(M,g,f)$ be a $2$-dimensional
$\kappa$-noncollapsed steady K\"{a}hler-Ricci soliton with nonnegative bisectional curvature and positive Ricci curvature.   Then for any  $p\neq o$, 
\begin{equation}\label{asymptotic-r}
R(p,t)\rightarrow0, ~ {\rm as} ~t\rightarrow-\infty.
\end{equation}

\end{cor}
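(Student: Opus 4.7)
The plan is to argue by contradiction using Lemma \ref{lem-curvature-decay} together with the classification of steady Ricci solitons in real dimension two. Suppose $R(p,t)\not\to 0$ as $t\to-\infty$ for some $p\neq o$. Then alternative ii) of Lemma \ref{lem-curvature-decay} must hold, so for some sequence $\tau_i\to-\infty$ there is a subsequential Cheeger-Gromov limit $(M_\infty,g_\infty(t),p_\infty)$, which is a $\kappa$-noncollapsed steady K\"ahler-Ricci soliton of complex dimension two with nonnegative bisectional curvature, with $R_\infty(p_\infty,t)\equiv 1$, and with a local splitting off a complex line.

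From the construction in the proof of Lemma \ref{lem-curvature-decay}, the local splitting arises from the globally defined parallel vector field $W=V-X$, which is nonzero at $p_\infty$ and hence of constant nonzero norm everywhere, together with $J_\infty W$, which is also globally parallel. I would pass to the universal cover $\widetilde{M}_\infty$ and invoke the K\"ahler de Rham decomposition theorem to obtain a global isometric product $\widetilde{M}_\infty=\mathbb{C}\times\widetilde{N}$, where $\widetilde{N}$ has complex dimension one. Since the $\mathbb{C}$ factor is flat, the soliton vector field and potential decompose along the product, producing a complete steady K\"ahler-Ricci soliton structure on $\widetilde{N}$. The $\kappa$-noncollapsing of $M_\infty$ also descends to a uniform $\kappa'$-noncollapsing of $\widetilde{N}$, because volumes of balls and curvature bounds transfer cleanly across the flat factor. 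At the basepoint $\tilde p_\infty$ over $p_\infty$ one has $R^{\widetilde N}(\tilde p_\infty,0)=R_\infty(p_\infty,0)=1>0$.

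Next I would invoke Hamilton's classification of steady Ricci solitons on surfaces. Since $\widetilde{N}$ is a real two-dimensional complete $\kappa'$-noncollapsed steady Ricci soliton with nonnegative curvature, the only possibilities are the flat plane or Hamilton's cigar; the cigar fails to be $\kappa$-noncollapsed, so $\widetilde{N}$ must be flat. This contradicts $R^{\widetilde N}(\tilde p_\infty,0)=1$. Hence alternative i) of Lemma \ref{lem-curvature-decay} must hold, which gives $R(p,t)\to 0$ as $t\to-\infty$ and completes the proof.

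The main obstacle I expect is the bookkeeping in the middle step: namely, verifying that the steady K\"ahler-Ricci soliton equation and the $\kappa$-noncollapsing condition genuinely descend to the one-complex-dimensional factor $\widetilde{N}$. Both are morally immediate from the product decomposition, but they require writing out the splitting of the soliton potential along the parallel complex line and the comparison of metric balls in the product with balls in the factor. Invoking the dimension-two classification is then routine.
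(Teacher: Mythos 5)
Your argument is correct and follows the paper's architecture almost exactly: contradiction via Lemma \ref{lem-curvature-decay}, passage to the universal cover, de Rham splitting off the complex line generated by the parallel fields $W$ and $J_{\infty}W$, and reduction to a real two-dimensional complete $\kappa$-noncollapsed steady soliton factor with positive scalar curvature at the basepoint. The only genuine divergence is the final classification step. The paper rules out the surface factor $(N,g_N(t))$ by citing Lemma 4.4 of \cite{DZ2} --- every complete pseudo-$\kappa$-solution on a surface is a shrinking flow of round spheres, so $N$ would be compact, while a compact gradient steady soliton must be flat --- whereas you invoke the flat-or-cigar classification of complete two-dimensional steady gradient Ricci solitons and discard the cigar because it is $\kappa$-collapsed. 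Both routes close the contradiction; yours is arguably more direct, since it uses the steady-soliton structure of the factor rather than only its pseudo-$\kappa$-solution property, but it does require two small observations: that the limit soliton is gradient (which follows from $\nabla^{(\infty)}_{i}V_{j}=\nabla^{(\infty)}_{j}V_{i}$ in the proof of Lemma \ref{lem-curvature-decay} once one passes to the simply connected cover, so $V$ is exact), and that the classification applies under the bounded-curvature hypothesis available here. The descent of $\kappa$-noncollapsing to the factor, which you flag as the main bookkeeping issue, is needed in both arguments (the paper uses it implicitly) and works by the standard comparison of product balls with balls in the factor, the flat $\mathbb{C}$-factor contributing no curvature.
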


\begin{proof} We prove the corollary  by contradiction.  Then  by  Lemma \ref{lem-curvature-decay}  there exist a point $p\in M$  and  a sequence of $t_i\to-\infty$ such that
\begin{align}\label{eq:inf limit of R}
\lim_{t\rightarrow-\infty}R(p,t_i)=C>0
\end{align}
and  rescaled $(M,R(p,t_i)g(R^{-1}(p,t_i)t),\phi_{t_{i}}(p))$ converges subsequently to  a  flow of $2$-dimensional   complete $\kappa$-noncollapsed,  steady K\"{a}hler-Ricci solitons   
\newline $(M_{\infty},g_{\infty}(t),p_{\infty})$ $(t\in (-\infty, \infty))$
which splits  locally  off  a complex line.    Moreover
\begin{align}\label{eq:constant limit-2}
R_{\infty}(p_{\infty},t)=1,~for~t\in(-\infty,+\infty).
\end{align}

 Let $(\widetilde {M}_{\infty}, \widetilde g( 0))$   be   the  universal covering of  $({M}_{\infty}, g( 0))$. Then it is easy to see that $(\widetilde {M}_{\infty}, \widetilde g( 0))$ still satisfies  the $\kappa$-noncollapsing   condition. Moveover, the parallel vector field $W$ in  Lemma \ref{lem-curvature-decay} can be lifted on $(\widetilde {M}_{\infty}, \widetilde g( 0))$ and this vector field  generates a trivial honolomy group of  $(\widetilde {M}_{\infty}, \widetilde g( 0))$. Thus by Wu's  de Rham decomposition Theorem \cite{Wu},   the  steady soliton $(\widetilde {M}_{\infty}, \widetilde g( 0))$  splits off  a  complex line.  Hence the  corresponding  steady solitons flow  $(\widetilde {M}_{\infty}, \widetilde g( t))$ splits off a  complex line, and so it  splits out a real $2$-dimensional   flow  $(N,g_N(t))$  of  complete   $\kappa$-noncollapsed steady Ricci solitons  with positive  curvature.   
  On the other hand, by Lemma 4.4 in   \cite{DZ2}, any  complete  pseudo-$\kappa$-solution on a surface is a shrinking flow of round spheres and so $(N,g_N(t))$ does.   In particular,  $(N, g_N(0))$ is compact.  But this is impossible since any compact  gradient steady Ricci soliton should be flat.  The corollary is proved.

\end{proof}

 Choosing  $p\neq o \in M$ and a  sequence of $t_k\rightarrow-\infty$.  Let $p_k=\phi_{t_k}$.   Then by Corollary \ref{cor-of-splitting-theorem},  $R(p,t_k)\rightarrow0 ~ {\rm as} ~t_k\rightarrow-\infty.$  Moreover, one can check 
  $p_{k}=( e^{-t_{k}h_1}, ...,0)$ under Poincar\'{e} coordinates.    Thus applying   Lemma \ref{prop-contradiction argument} to  $2$-dimensional  steady K\"ahler-Ricci soliton  together with Cao's dimension reduction theorem, we prove
  
\begin{prop}\label{flat-theorem}
Let $(M,g)$ be a $2$-dimensional $\kappa$-noncollapsed steady K\"ah-\\ler-Ricci soliton with nonnegative bisectional curvature.   Then  $(M,g)$  is flat.
\end{prop}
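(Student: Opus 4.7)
The plan is to argue by contradiction: assume $(M,g)$ is not flat, and derive incompatible upper and lower bounds for the scalar curvature along a sequence of points that escapes to infinity in Bryant's Poincar\'e coordinates.

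First I would reduce to the case where the Ricci curvature of $(M,g)$ is strictly positive. If not, then by Cao's dimension reduction theorem applied to the nonnegative bisectional curvature (the same tool already invoked inside the proof of Lemma \ref{lem-curvature-decay}), the universal cover of $M$ splits as a K\"ahler product $N \times \mathbb{C}$, where $N$ is a complete real $2$-dimensional steady Ricci soliton. The factor $N$ inherits nonnegative curvature and $\kappa$-noncollapsing; by Hamilton's classification of $2$-dimensional steady Ricci solitons the only possibilities are flat $\mathbb{R}^2$ and the cigar, and the cigar is excluded because it is not $\kappa$-noncollapsed at infinity. Hence $N$ is flat, the universal cover is $\mathbb{C}^2$, and $(M,g)$ is flat, contradicting our assumption. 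Therefore we may assume $(M,g)$ has positive Ricci curvature, so that both Corollary \ref{cor-of-splitting-theorem} and Lemma \ref{prop-contradiction argument} are now applicable.

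With positive Ricci in hand, Corollary \ref{cor-of-splitting-theorem} yields, for any fixed $p \neq o$, the curvature decay
\[
R_g(\phi_{t_k}(p)) = R(p, t_k) \longrightarrow 0 \quad \text{as } t_k \to -\infty.
\]
On the other hand, using Bryant's Poincar\'e coordinates from \eqref{Bryant-eq}, the real flow $\phi_t$ of $-\nabla f$ acts linearly on each coordinate axis, so choosing $p = (1, 0)$ one checks that $p_k := \phi_{t_k}(p) = (e^{-h_1 t_k}, 0)$, whose norm tends to infinity as $t_k \to -\infty$ because $h_1 > 0$. This is precisely the type of sequence handled by Lemma \ref{prop-contradiction argument} (the argument there depends only on $p_k$ lying on the first axis with $|z_1| \to \infty$, not on the specific parametrization $p_k = (k, 0)$). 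The lemma then provides a uniform lower bound $R_g(p_k) \geq C > 0$, directly contradicting the curvature decay above, and the proposition follows.

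The step I expect to require the most care is the initial reduction to positive Ricci curvature: it is invoked by the paper only implicitly through Cao's dimension reduction and the two-dimensional classification of real steady Ricci solitons, and a rigorous write-up must verify that $\kappa$-noncollapsing passes to the $2$-dimensional factor so as to exclude the cigar. Once that reduction is secured, the remainder is a clean synthesis of Corollary \ref{cor-of-splitting-theorem}, Bryant's linearization \eqref{Bryant-eq}, and Lemma \ref{prop-contradiction argument}.
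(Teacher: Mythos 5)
Your proposal is correct and follows essentially the same route as the paper: reduce to positive Ricci curvature via Cao's dimension reduction (plus the two‑dimensional classification), then play the curvature decay $R(p,t_k)\to 0$ from Corollary \ref{cor-of-splitting-theorem} against the uniform lower bound of Lemma \ref{prop-contradiction argument} applied to $p_k=(e^{-h_1t_k},0)$ in Bryant's Poincar\'e coordinates. You merely spell out the two steps (the splitting/cigar exclusion and the applicability of Lemma \ref{prop-contradiction argument} to this parametrization) that the paper leaves implicit.
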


Now we complete  the proof of Theorem  \ref{main-theorem-nonexistence-1}.

\begin{proof}[Proof of Theorem \ref{main-theorem-nonexistence-1}]
By Proposition \ref{flat-theorem},  we can use the induction argument as in the proof of Theorem 1.3, \cite{DZ2}.  Suppose that there is no $k$-dimensional
$\kappa$-noncollapsed steady K\"{a}hler-Ricci soliton with nonnegative bisectional curvature and positive Ricci curvature for all $k<n$. We claim

\begin{claim}\label{key-lemma} Under the induction hypothesis, for any fixed $p\in M\setminus\{o\}$,
$R(p,-t)\to 0$ as $t\to\infty.$
\end{claim}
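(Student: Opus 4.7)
I would argue by contradiction, extending to general $n$ the strategy used in the proof of Corollary \ref{cor-of-splitting-theorem}, but replacing the specific two-dimensional input there by the induction hypothesis. Suppose the claim fails for some $p\neq o$, so that $R(p,-t)\not\to 0$ as $t\to\infty$. By the monotonicity (\ref{montone}) and the uniqueness of the equilibrium point, $R(p,t)$ is strictly increasing in $t$ and the limit $C_0=\lim_{t\to-\infty}R(p,t)$ satisfies $0<C_0<R_{\max}$, i.e.\ (\ref{eq:inf limit-of-R}) holds. Therefore case i) of Lemma \ref{lem-curvature-decay} is excluded and case ii) applies: for any $\tau_i\to -\infty$, the rescaled flows $(M,R(p,\tau_i)g(R^{-1}(p,\tau_i)t),\phi_{\tau_i}(p))$ converge subsequently to an $n$-dimensional complete $\kappa$-noncollapsed steady K\"ahler-Ricci soliton $(M_\infty,g_\infty(t),p_\infty)$ of nonnegative bisectional curvature, normalized by $R_\infty(p_\infty,0)=1$, that splits locally off a complex line.

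Next I would pass to the universal cover. As in the proof of Corollary \ref{cor-of-splitting-theorem}, the parallel vector fields $W$ and $J_\infty W$ constructed inside Lemma \ref{lem-curvature-decay} lift to globally parallel holomorphic vector fields on the universal cover $(\widetilde{M}_\infty,\widetilde{g}_\infty)$ with trivial holonomy action, and Wu's de Rham decomposition theorem \cite{Wu} then yields a K\"ahler-isometric splitting $\widetilde{M}_\infty=\CC\times N$ where $N$ is a complete K\"ahler manifold of complex dimension $n-1$. The flat $\CC$-factor carries only a linear soliton potential, so the steady K\"ahler-Ricci soliton structure descends cleanly to $N$, making $N$ a complete $\kappa'$-noncollapsed steady K\"ahler-Ricci soliton with nonnegative bisectional curvature; since $R_N(\tilde p_\infty)=R_\infty(p_\infty,0)=1$, the factor $N$ is not flat.

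The remaining task is to extract from $N$ a lower-dimensional factor with strictly positive Ricci curvature to which the induction hypothesis can be applied. If the Ricci curvature of $N$ is already strictly positive everywhere, the induction hypothesis in dimension $n-1$ directly contradicts the existence of $N$. Otherwise, the Ricci tensor degenerates somewhere and its null distribution is parallel and holomorphic in view of the soliton identity together with the nonnegative bisectional curvature assumption; invoking Cao's dimension reduction theorem \cite{C2} I would split $N$ further on its universal cover, iterating the procedure until I obtain a complete $\kappa$-noncollapsed steady K\"ahler-Ricci soliton $N'$ of some complex dimension $k\le n-1$ with nonnegative bisectional curvature and strictly positive Ricci curvature. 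Nontriviality of $N'$ is ensured because the scalar curvature value $1$ at the base point is transported into the non-flat factor at every splitting step. The induction hypothesis applied to $N'$ then yields the sought contradiction and proves the claim.

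The step I expect to require the most care is the bookkeeping of the $\kappa$-noncollapsing constant through the successive passages to universal covers and the iterated Cao splittings, since the inductive statement has $\kappa$-noncollapsing built into its hypotheses; this should however follow from the product structure of volumes under K\"ahler splittings and from the uniform volume contribution of a flat Euclidean factor in small balls, so that a genuine $\kappa''$-noncollapsing estimate survives at each stage.
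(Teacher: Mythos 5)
Your proposal is correct and follows essentially the same route as the paper: contradiction via case ii) of Lemma \ref{lem-curvature-decay}, passage to the universal cover and Wu's de Rham splitting to extract a non-flat $(n-1)$-dimensional steady K\"ahler-Ricci soliton factor, reduction to positive Ricci curvature via Cao's dimension reduction theorem, and then the induction hypothesis. The extra care you flag about iterating Cao's splitting and tracking the $\kappa$-noncollapsing constant is simply an elaboration of the step the paper states more tersely.
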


 The proof  of   Claim \ref{key-lemma}  is  similar to one of Corollary \ref{cor-of-splitting-theorem}.    In fact, if the claim is not true, then  by  Lemma \ref{lem-curvature-decay}  there exist a point $p\in M$  and  a sequence of $t_i\to-\infty$ such that
\begin{align}\label{eq:inf limit of R}
\lim_{t\rightarrow-\infty}R(p,t_i)=C>0
\end{align}
and  rescaled $(M,R(p,t_i)g(R^{-1}(p,t_i)t),\phi_{t_{i}}(p))$ converges subsequently to  a  flow $(M_{\infty},g_{\infty}(t),p_{\infty})$ $(t\in (-\infty, \infty))$  of  complete $\kappa$-noncollapsed,  steady K\"{a}hler-Ricci solitons   
which splits  locally  off  a complex line.  Moreover
\begin{align}\label{eq:constant limit-2}
R_{\infty}(p_{\infty},t)=1,~for~t\in(-\infty,+\infty).
\end{align}
 Now we can consider    the  universal covering   $(\widetilde {M}_{\infty}, \widetilde g( 0))$  of  $({M}_{\infty}, g( 0))$.  As in  Corollary \ref{cor-of-splitting-theorem},   $(\widetilde {M}_{\infty}, \widetilde g( 0))$  can be split out  an $n-1$-dimensional   complete   $\kappa$-noncollapsed steady K\"{a}hler-Ricci soliton  $(N,g_N)$ with nonnegative bisectional curvature.   Note that  $(N,g_N)$  is not flat by (\ref{eq:constant limit-2}).   Thus by 
Cao's dimension reduction theorem, we may assume that $(N,g_N)$ has positive Ricci curvature.  On the other hand,  by the induction assumpation,  $(N,g_N)$ should be flat. This is a contradiction!  The claim is proved.

 By Claim \ref{key-lemma}, we can choose $p_{k}$ as in the proof of  Proposition \ref{flat-theorem} to apply Lemma  \ref{prop-contradiction  argument} to finish the proof of Theorem \ref{main-theorem-nonexistence-1} .

\end{proof}

 Theorem \ref{theorem-eternal-flow} is an application of Theorem  \ref{main-theorem-nonexistence-1} by using an argument  as in proof of  Theorem 1.4 in \cite{DZ2}.

\vskip6mm

%\newpage

\section*{References}

\small

\begin{enumerate}

\renewcommand{\labelenumi}{[\arabic{enumi}]}

%\bibitem{Br1} Brendle, S., \textit{Rotational symmetry of self-similar solutions to the Ricci flow}, Invent. Math. , %\textbf{194} No.3 (2013), 731-764.

%\bibitem{Br2} Brendle, S., \textit{Rotational symmetry of Ricci solitons in higher dimensions}, J. Diff. Geom., \textbf{97} %(2014), no. 2, 191-214.

\bibitem{Bry} Bryant, R. L.,\textit{Gradient K\"{a}hler Ricci solitons}, G\'{e}omtri\'{e} diff\'{e}rentielle,
physi-que math\'{e}matique, math\'{e}matiques et soci\'{e}t\'{e}. I. Ast\'{e}risque, No. 321 (2008),
51-97.

\bibitem{C1} Cao, H.D., \textit{Limits of solutions to the K\"{a}hler-Ricci flow},
J. Diff. Geom., \textbf{45}
(1997), 257-272.

%\bibitem{Ca2} Cao, H.D., \textit{Existence of gradient K\"{a}hler-Ricci solitons}, Elliptic and parabolic methods in %geometry (Minneapolis, MN, 1994), 1-16, A K Peters, Wellesley, MA, 1996.

\bibitem{C2} Cao, H.D., \textit{On dimension reduction in the K\"{a}hler-Ricci flow}, Comm.
Anal. Geom.,
\textbf{12}, No. 1, (2004), 305-320.

%\bibitem{CaCh} Cao, H.D. and Chen, Q., \textit{On locally conformally flat gradient steady Ricci solitons},
%Trans. Amer. Math. Soc., \textbf{364} (2012), 2377-2391 .

\bibitem{CG} Cheeger, J. and Gromoll, D., \textit{The splitting theorem for manifolds of nonnegative Ricci curvature}, J. Diff. Geom., \textbf{6} (1972), 119-128.

%\bibitem{CGT} Cheeger, J., Gromov, M. and Taylor, M., \textit{Finite propagation speed, kernel estimates for functions %of the Laplace operator, and the geometry of complete Riemannian manifolds}, J. Diff. Geom., \textbf{17} (1982), %15-53.

%\bibitem{CZ} Chen, B.L. and Zhu X.P., \textit{Volume growth and curvature decay of positively curved K\"{a}hler manifolds},
%Q. J. Pure Appl. Math. , \textbf{1} (2005), no. 1, 68-108.

%\bibitem{Ch} Chodosh, O. and Fong T.H., \textit{Rotational symmetry of conical K\"ahler-Ricci solitons}, %arXiv:math/1304.0277v2.

\bibitem{CT} Chau, A. and Tam, L-F., \textit{Non-negatively curved K\"ahler manifolds with average
quadratic curvature decay}, Comm. Anal. Geom. \textbf{15} (2007), no. 1, 121-146.

\bibitem{DZ1}Deng, Y.X. and Zhu, X.H., \textit{Complete non-compact gradient Ricci solitons with nonnegative Ricci curvature},
Math. Z., \textbf{279} (2015), no. 1-2, 211-226.

\bibitem{DZ2} Deng, Y.X. and Zhu, X.H., \textit{Asymptotic behavior of positively curved steady Ricci solitons}, arXiv:math/1507.04802.

%\bibitem{H2} Hamilton, R.S., \textit{Three manifolds with positive Ricci curvature}, J. Diff. Geom., \textbf{17} (1982), %255-306.

%\bibitem{H} Hamilton, R.S., \textit{The Harnack estimate for the Ricci flow}, J. Diff. Geom., \textbf{37} (1993), 225-243.

%\bibitem{H4} Hamilton, R.S., \textit{A compactness property for solution of the Ricci flow}, Amer. J. Math., \textbf{117} %(1995), 545-572.

\bibitem{Ha} Hamilton, R.S., \textit{Formation of singularities in the Ricci flow}, Surveys in Diff. Geom., \textbf{2} (1995),
7-136.

\bibitem{MT} Morgan, J. and Tian, G., \textit{ Ricci flow and the Poincar\'{e} conjecture}, Clay Math. Mono., 3. Amer. Math. Soc., Providence, RI; Clay Mathematics Institute, Cambridge, MA, 2007.

\bibitem{Ni} Ni, L., \textit{Ancient solutions to K\"{a}hler-Ricci flow},  Math. Res. Lett., 12 (2005), 633-654.

%\bibitem{Na} Naber, A., \textit{Noncompact shrinking four solitons with nonnegative curvature}, J. Reine Angew Math., %\textbf{645} (2010), 125-153.

\bibitem{Pe} Perelman, G., \textit{The entropy formula for the Ricci flow and its geometric applications}, arXiv:math/0211159.

%\bibitem{Pe2} Perelman, G., \textit{Ricci
%flow with surgery on three-manifolds},\\ arXiv:math/0303109v1.

\bibitem{Sh} Shi, W.X., \textit{Ricci deformation of the metric on complete noncompact Riemannian
manifolds}, J. Diff. Geom., \textbf{30} (1989), 223-301.

 \bibitem{Wu} Wu,  H., \textit{ On the de Rham decomposition Theorem}, Illinois J. Math., \textbf{8} (1964),   291-311. 

\end{enumerate}

\end{document}